\documentclass[12pt]{amsart}
\usepackage{array,CJK}
\usepackage{curves}
\usepackage{amstext}
\usepackage{amsfonts}
\usepackage{amsmath}
\usepackage{amssymb}
\def\cal{\mathcal}

\addtolength{\textwidth}{0.8in}
\addtolength{\oddsidemargin}{-0.6in}
\addtolength{\evensidemargin}{-0.6in}
\addtolength{\textheight}{0.3in}
\addtolength{\topmargin}{-.1in}
\addtolength{\headheight}{3pt}

\newtheorem{theorem}{Theorem}[section]
\newtheorem{corollary}[theorem]{Corollary}
\newtheorem{lemma}[theorem]{Lemma}

\theoremstyle{definition}
\newtheorem{definition}[theorem]{Definition}
\newtheorem{example}[theorem]{Example}
\theoremstyle{remark}
\newtheorem{remark}[theorem]{Remark}

\title[Disjoint transitivity on Orlicz spaces]
{Disjoint topological transitivity for weighted translations on Orlicz spaces}

\author[C-C. Chen]{Chung-Chuan Chen}
\address{Department of Mathematics Education, National Taichung University of Education, Taichung 403, Taiwan}
\email{chungchuan@mail.ntcu.edu.tw}

\author[M. Kosti\' c]{Marko Kosti\' c}
\address{Faculty of Technical Sciences, University of Novi Sad, Trg D. Obradovi\' ca 6, 21125 Novi Sad, Serbia}
\email{marco.s@verat.net}

\subjclass[2010]{47A16, 54H20, 46E30}
\keywords{Disjoint chaos; Disjoint topological transitivity; Translation operator; Orlicz space; Locally compact group.}


\date{\today}
\begin{document}

\maketitle

\begin{abstract}
Let $G$ be a locally compact group, and let $\Phi$ be a Young function.
In this paper, we give a sufficient and necessary condition for weighted translations on the Orlicz space $L^\Phi(G)$
to be disjoint topologically transitive. This characterization for disjoint chaos follows from the investigation on disjoint transitivity
immediately.
\end{abstract}

\baselineskip17pt

\section{introduction}

About one decade ago, Bernal-Gonz\'alez, B\`es and Peris introduced new notions of linear dynamics, namely, disjoint transitivity and disjoint hypercyclicity in \cite{bg07} and \cite{bp07} respectively. Since then, disjoint transitivity and disjoint hypercyclicity were studied intensely by many authors \cite{bm12,bmp11,bmps12,bms14,kostic,kosticbook,kostic15,ma10,sa11,sa13,shk10,ss14,vlac}.
Indeed, the existence of disjoint hypercyclic operators on separable, infinite-dimensional topological vector spaces was investigated by Shkarin and Salas in \cite{shk10,ss14} independently. B\`es, Martin and Peris studied the disjoint hypercyclicity of composition operators on spaces of holomorphic functions in \cite{bm12,bmp11}. The characterizations for weighted shifts and powers of weighted shifts on $\ell^p(\Bbb Z)$ to be disjoint hypercyclic and supercyclic were demonstrated in \cite{bms14,bp07,ma10} respectively.
In addition, the necessary and sufficient condition for sequences of operators, which map a holomorphic function to a partial sum of its Taylor expansion, to be disjoint universal was given by Vlachou in \cite{vlac}. Kosti\'c also studied disjoint hypercyclicity of $C$-distribution cosine functions and semigroups in \cite{kostic,kostic15}.

In the investigation of disjoint dynamics and disjoint hypercyclicity, the weighted shifts on $\ell^p(\Bbb{Z})$ play an important role to provide concrete examples, which can be viewed as special cases of weighted translations on the Lebesgue space of locally compact groups. Hence the disjoint hypercyclicity on locally compact groups was investigated in \cite{chen171,chen172,hl16}, which subsumes some works of disjoint dynamics on the discrete group $\Bbb Z$ in \cite{bms14,bp07,ma10}. On the other hand, the study on linear chaos and hypercyclicity is recently extended from the Lebesgue space of locally compact groups to the Orlicz space in \cite{aa17,chen-orlicz}. The Orlicz space is a type of important function space generalizing the Lebesgue space. Based on and inspired by these works, our aim in this paper is to characterize disjoint chaos and disjoint transitivity for weighted translations on the Orlicz space of locally compact groups.

These new notions, {\it disjoint topological transitivity} and {\it disjoint hypercyclicity}, are kind of generalizations of transitivity and hypercyclicity respectively. An operator $T$ on a separable Banach space $X$ is called {\it hypercyclic} if there exists $x\in X$ such that the orbit of $x$ under $T$, denote by $Orb(T,x):={\{T^nx:n\in\Bbb{N}\}}$, is dense in $X$.
 It is known that hypercyclicity is equivalent to topological transitivity on $X$. An operator $T$ is
{\it topologically transitive} if given two nonempty open subsets $U,V\subset X$, there is some $n\in\Bbb{N}$ such that $T^n(U)\cap V\neq\emptyset$.
If $T^n(U)\cap V\neq\emptyset$ from some $n$ onwards, then $T$ is called {\it topologically mixing}. If $T$ is topologically
transitive and the set of periodic elements of $T$ is dense in $X$, then $T$ is {\it chaotic}.
We first recall some definitions of disjointness in \cite{bg07,bp07} for further discussions.

\begin{definition}
Given $L\geq2$, the operators $T_1,T_2,\cdot\cdot\cdot,T_L$ acting on a separable Banach space $X$ are {\it disjoint hypercyclic}, or {\it diagonally hypercyclic} (in short, {\it d-hypercyclic}) if there is some vector $(x,x,\cdot\cdot\cdot,x)$ in the diagonal of $X^L=X\times X\times \cdot\cdot\cdot\times X$ such that
$$\{(x,x,\cdot\cdot\cdot,x),(T_1x,T_2x,\cdot\cdot\cdot,T_Lx),(T^2_1x,T^2_2x,\cdot\cdot\cdot,T^2_Lx),\cdot\cdot\cdot\}$$
is dense in $X^L$; if this is the case, then we say $x\in X$ is a d-hypercyclic vector associated to the operators $T_1,T_2,\cdot\cdot\cdot,T_L$.
\end{definition}

For topological dynamics, several new notions were given accordingly in \cite{bp07} as follows:

\begin{definition}
Given $L\geq2$, the operators $T_1,T_2,\cdot\cdot\cdot,T_L$ on a separable Banach space $X$ are {\it disjoint topologically transitive} or {\it diagonally topologically transitive} (in short, {\it d-topologically transitive}) if given nonempty open sets $U,V_1,\cdot\cdot\cdot,V_L\subset X$, there is some $n\in\Bbb{N}$ such that
$$\emptyset\neq U\cap T_1^{-n}(V_1)\cap T_2^{-n}(V_2)\cap\cdot\cdot\cdot\cap T_L^{-n}(V_L).$$
If the above condition is satisfied from some $n$ onwards, then $T_1,T_2,\cdot\cdot\cdot,T_L$ are called {\it disjoint topologically mixing}  (in short, {\it d-topologically mixing}).
\end{definition}

For a single operator, hypercyclicity and topological transitivity are equivalent. However, this is not the case for disjointness.
Indeed, in \cite[Proposition 2.3]{bp07}, the operators $T_1,T_2,\cdot\cdot\cdot,T_L$ are d-topologically transitive if, and only if, $T_1,T_2,\cdot\cdot\cdot,T_L$ have a dense set of d-hypercyclic vectors.

Linear chaos has attracted a lot of attention during last three decades. For instance, linear chaos on $\ell^p(\Bbb{Z}), L^p(\Bbb{R})$ and $L^p(\Bbb{C})$ spaces, and other spaces were studied in \cite{bb09,BBCP,chen141,ge00,kalmes07,kostic09,mp10}. To study linear dynamics systematically, we refer to these classic books \cite{bmbook,gpbook,kosticbook}. Following the idea in \cite{bp07}, the investigation of disjoint chaos was initiated recently in \cite{ckpv}. We formulate the definition of disjoint chaos below.

\begin{definition}
Given $L\geq2$, the operators $T_1,T_2,\cdot\cdot\cdot,T_L$ on a separable Banach space $X$ are {\it disjoint chaotic} or {\it diagonally chaotic} (in short, {\it d-chaotic}) if they are disjoint transitive and the set of periodic elements, denoted by
${\cal P}(T_1,T_2,\cdot\cdot\cdot,T_L)=\{(x_1,x_2,\cdot\cdot\cdot,x_L)\in X^L: \exists\ n \in {\Bbb N}\ \mbox{with}
\ (T_1^nx_1,T_2^nx_2,\cdot\cdot\cdot,T_L^nx_L)=(x_1,x_2,\cdot\cdot\cdot,x_L)\},$ is dense in $X^L$.
\end{definition}

\begin{remark}\label{remark}
According to the definition of disjoint chaos above, it should be noted that the operators $T_1,T_2,\cdot\cdot\cdot,T_L$ are disjoint chaotic if, and only if, they are disjoint transitive and each of them is chaotic separately.
\end{remark}

In this paper, we will mainly give a necessary and sufficient condition for weighted translation operators on the Orlicz space of a locally compact group
to be disjoint topologically transitive. We introduce Orlicz spaces briefly to begin our study.
A continuous, even and convex function $\Phi:{\Bbb R}\rightarrow {\Bbb R}$ is called a {\it Young function} if it satisfies $\Phi(0)=0$, $\Phi(t)>0$ for $t>0$, and $\lim_{t\rightarrow\infty}\Phi(t)=\infty$.
The complementary function $\Psi$ of a Young function $\Phi$ is defined by
$$\Psi(y)=\sup\{x|y|-\Phi(x):x\geq 0\}$$
for $y\in\Bbb{R}$, which is also a Young function. If $\Psi$ is the complementary function of $\Phi$, then
$\Phi$ is the complementary function of $\Psi$, and the Young inequality
$$xy\leq \Phi(x)+\Psi(y)$$
holds for $x,y\geq0$.
Let $G$ be a locally compact group with identity $e$ and a right Haar measure $\lambda$. Then for a Borel function $f$, the set
$$L^\Phi(G)=\left\{f:G\rightarrow \Bbb{C}: \int_G\Phi(\alpha|f|)d\lambda<\infty\ \text{for some $\alpha>0$} \right\}$$
is called the {\it Orlicz space}, which is a Banach space under the Luxemburg norm $N_\Phi$,
defined for $f\in L^\Phi(G)$ by
$$N_\Phi(f)=\inf\left\{k>0:\int_G\Phi\left(\frac{|f|}{k}\right)d\lambda\leq1\right\}.$$
The Orlicz space is a generalization of the usual Lebesgue space.
Indeed, let $\Phi(t)=\frac{|t|^p}{p}$, then the Orlicz space $L^\Phi(G)$ is the Lebesgue space $L^p(G)$.
Over the last several decades, the important properties and interesting structures of Orlicz spaces have been investigated intensely
by many authors. For instance, Piaggio in \cite{pi17} considered Orlicz spaces and the large scale geometry of Heintze groups.
Also, Tanaka recently studied the properties $(T_{L^\Phi})$ and $(F_{L^\Phi})$ for Orlicz spaces $L^\Phi$ in \cite{ta17}.
Weighted Orlicz algebras on locally compact groups were introduced and investigated in \cite{oo15}, which generalized the group algebras.
Hence it is nature to study linear chaos on the wider setting of Orlicz spaces. For more recent works and the textbooks on Orlicz spaces, we refer to \cite{cl17,ha15,rr91}.

It was showed independently in \cite{an97,bg99} that
a Banach space admits a hypercyclic operator if, and only if, it is separable and infinite-dimensional.
Hence we assume that $G$ is second countable and $\Phi$ is $\Delta_2$-regular throughout.
A Young function $\Phi$ is $\Delta_2$-{\it regular} in \cite{rr91} if there exist a constant $M>0$ and $t_0>0$ such that $\Phi(2t)\leq M\Phi(t)$ for $t\geq t_0$ when $G$ is compact, and $\Phi(2t)\leq M\Phi(t)$ for all $t>0$ when $G$ is noncompact. For example, the Young functions $\Phi$ given by
$$\Phi(t)=\frac{|t|^p}{p}\quad(1\leq p<\infty),\qquad \mbox{and}\qquad\Phi(t)=|t|^\alpha(1+|\log|t||)\quad (\alpha>1)$$
are both $\Delta_2$-regular in \cite{aa17,rr91}. If $\Phi$ is $\Delta_2$-regular, then the space $C_c(G)$ of all continuous functions on $G$ with compact support is dense in $L^\Phi(G)$.

A bounded continuous function $w:G \rightarrow (0,\infty)$ is called a {\it weight} on $G$.
Let $a \in G$ and let $\delta_a$ be the unit point mass at $a$.
A {\it weighted translation} on $G$ is
a weighted convolution operator $T_{a,w}: L^\Phi(G)\longrightarrow L^\Phi(G)$ defined by
$$T_{a,w}(f)= wT_a(f) \qquad (f \in L^\Phi(G))$$
where $w$ is a weight on $G$ and $T_a(f)=f*\delta_a\in L^\Phi(G)$ is
the convolution:
$$(f*\delta_a)(x)= \int_{y\in G} f(xy^{-1})\delta_a(y) = f(xa^{-1})
\qquad (x\in G).$$
If $w^{-1}\in L^{\infty}(G)$, then we can define a self-map $S_{a,w}$ on $L^\Phi(G)$  by
$$S_{a,w}(h) = \frac{h}{w}* \delta_{a^{-1}} \qquad (h \in L^\Phi(G))$$ so that
$$T_{a,w}S_{a,w}(h) = h \qquad (h \in L^\Phi(G)).$$
In what follows, we assume $w,w^{-1}\in L^\infty(G)$.

We note that if $\|w\|_{\infty}\leq1$, then $\|T_{a,w}\|\leq1$ and $T_{a,w}$ is never transitive (hypercyclic).
Also, $T_{a,w}$ is not transitive if $a$ is a torsion element of $G$ in \cite{aa17}.
An element $a$ in a group $G$ is called a {\it torsion element} if
it is of finite order. In a locally compact group $G$, an element $a\in G$ is called {\it
periodic} (or {\it compact}) in \cite{rossbook} if the closed subgroup $G(a)$
generated by $a$ is compact. We call an element in
$G$ {\it aperiodic} if it is not periodic. For discrete groups,
periodic and torsion elements are identical.

It was proved in \cite{cc11} that an element $a\in G$ is aperiodic if, and only if, for any compact set $K\subset G$, there exists some
$M\in\Bbb N$ such that $K\cap Ka^{\pm n}=\emptyset$ for all $n>M$.
We will make use of the property of aperiodicity to achieve our goal. We note that \cite{cc11} in many familiar non-discrete groups, including the additive group $\Bbb R^d$, the Heisenberg group and the affine group, all elements except the identity are aperiodic.

In Section 2, we will characterize disjoint transitivity of powers of weighted translations, generated by the weights and an aperiodic element, on $L^\Phi(G)$ in terms of the weights, the Haar measure and the aperiodic element. Applying the result on disjoint transitivity, the characterization
for powers of weighted translations to be disjoint chaotic follows.

\section{disjoint transitivity}

Before proving the result, we recall some useful observations on the norm $N_\Phi$.
Let $B\subset G$ be a Borel set with $\lambda(B)>0$, and let $\chi_B$ be the characteristic function
of $B$. Then by a simple computation,
$$N_\Phi(\chi_B)=\frac{1}{\Phi^{-1}(\frac{1}{\lambda(B)})}$$
where $\Phi^{-1}(t)$ is the modulus of the preimage of a singleton $t$ under $\Phi$.
Besides, the norm of a function $f\in L^\Phi(G)$ is invariant under the translation by an element $a\in G$.
We include the proof for completeness.

\begin{lemma}$($\cite[Lemma 1]{chen-orlicz}\label{invariant}$)$
Let $G$ be a locally compact group, and let $a\in G$. Let $\Phi$ be a Young function, and let $f\in L^\Phi(G)$.
Then we have
$$N_\Phi(f)=N_\Phi(f*\delta_a).$$
\end{lemma}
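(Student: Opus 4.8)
The plan is to reduce the claimed equality to the right-invariance of the Haar measure $\lambda$, which is exactly what is available by hypothesis. First I would unwind the definitions: since $(f * \delta_a)(x) = f(xa^{-1})$, the modulus satisfies $|(f*\delta_a)(x)| = |f(xa^{-1})|$, so that for every fixed $k > 0$ the modular integral defining the Luxemburg norm of $f*\delta_a$ becomes
$$\int_G \Phi\left(\frac{|(f*\delta_a)(x)|}{k}\right) d\lambda(x) = \int_G \Phi\left(\frac{|f(xa^{-1})|}{k}\right) d\lambda(x).$$

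Next I would apply right-invariance. Setting $g(x) := \Phi(|f(x)|/k)$, the integrand on the right is precisely $g(xa^{-1})$, and since $\lambda$ is a right Haar measure we have $\int_G g(xa^{-1}) \, d\lambda(x) = \int_G g(x) \, d\lambda(x)$. Hence, for every $k > 0$,
$$\int_G \Phi\left(\frac{|(f*\delta_a)(x)|}{k}\right) d\lambda(x) = \int_G \Phi\left(\frac{|f(x)|}{k}\right) d\lambda(x),$$
so the two modular functions of the variable $k$ coincide identically on $(0,\infty)$.

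Finally, because the Luxemburg norm is defined as the infimum of those $k > 0$ for which the corresponding modular integral is at most $1$, and these admissible sets of $k$ are now seen to be identical for $f$ and for $f*\delta_a$, the two infima agree. This yields $N_\Phi(f) = N_\Phi(f*\delta_a)$, as desired.

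As for difficulty, there is essentially no genuine obstacle in this argument; the only point requiring care is to invoke \emph{right}-invariance of $\lambda$ (matching the right translation $x \mapsto xa^{-1}$ induced by convolution with $\delta_a$) rather than left-invariance, and to observe that the change of variable is valid simultaneously for all $k > 0$, which is precisely what permits the two infima to be compared directly rather than merely estimated.
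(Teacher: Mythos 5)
Your proposal is correct and follows essentially the same route as the paper: both arguments rewrite $|(f*\delta_a)(x)|$ as $|f(xa^{-1})|$ and apply right-invariance of the Haar measure (the substitution $t = xa^{-1}$, $d\lambda(t)=d\lambda(x)$) to identify the modular integrals, hence the defining sets of admissible $k$ and their infima. Your version merely makes explicit the intermediate observation that the modulars agree for every fixed $k>0$, which the paper carries out implicitly inside the infimum.
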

\begin{proof}
By the right invariance of the Haar measure $\lambda$ and the definition of the norm
$$N_\Phi(f)=\inf\left\{k>0:\int_G\Phi\left(\frac{|f|}{k}\right)d\lambda\leq1\right\},$$
we have
\begin{eqnarray*}
N_\Phi(f*\delta_a)&=&\inf\left\{k>0:\int_G\Phi\left(\frac{|f*\delta_a|}{k}\right)d\lambda\leq1\right\}\\
&=&\inf\left\{k>0:\int_G\Phi\left(\frac{1}{k}\left|\int_Gf(xy^{-1})d\delta_a(y)\right|\right)d\lambda(x)\leq1\right\}\\
&=&\inf\left\{k>0:\int_G\Phi\left(\frac{1}{k}|f(xa^{-1})|\right)d\lambda(x)\leq1\right\}\\
&=&\inf\left\{k>0:\int_G\Phi\left(\frac{1}{k}|f(t)|\right)d\lambda(t)\leq1\right\}=N_\Phi(f)
\end{eqnarray*}
where $t=xa^{-1}$ and $d\lambda(t)=d\lambda(xa^{-1})=d\lambda(x)$.
\end{proof}

Now we are ready to give and prove the main result.

\begin{theorem}\label{disjointtransitive}
Let $G$ be a locally compact group, and let $a\in G$ be an aperiodic element. Let $w$ be a weight on $G$, and let
$\Phi$ be a Young function. Given some $L\geq2$, let $T_l=T_{a,w_l}$ be a
weighted translation on $L^{\Phi}(G)$, generated by $a$ and
a weight $w_l$ for $1\leq l\leq L$. For $1\leq r_1< r_2< \cdot\cdot\cdot < r_L$,
the following conditions are equivalent.
\begin{enumerate}
\item[{\rm(i)}] $T_1^{r_1}, T_2^{r_2},\cdot\cdot\cdot, T_L^{r_L}$ are disjoint topologically transitive on $L^{\Phi}(G)$.
\item[{\rm(ii)}] For each compact subset $K \subset G$ with $\lambda(K) >0$, there is a
sequence of Borel sets $(E_{k})$ in $K$ such that
$\displaystyle\lambda(K) = \lim_{k \rightarrow \infty}
\lambda(E_{k})$ and both sequences
$$\varphi_{l,n}:=\prod_{j=1}^{n}w_l\ast\delta_{a^{-1}}^{j}\ \ \ \  and \ \ \
\ \ {\widetilde \varphi_{l,n}}:=\left(\prod_{j=0}^{n-1}w_l\ast\delta_{a}^{j}\right)^{-1}$$
admit respectively subsequences $(\varphi_{l,r_ln_{k}})$ and $(\widetilde{\varphi}_{l,r_ln_k})$ satisfying
$($ for $1\leq l \leq L$ $)$
$$\lim_{k\rightarrow \infty}\|\varphi_{l,r_ln_{k}}|_{_{E_{k}}}\|_\infty=
\lim_{k\rightarrow \infty}\|{\widetilde \varphi_{l,r_ln_{k}}}|_{_{E_{k}}}\|_\infty=0$$
and $($ for $1\leq s < l \leq L$ $)$
$$\lim_{k\rightarrow \infty}\left\|\frac{{\widetilde \varphi_{s,(r_l-r_s)n_{k}}}\cdot{\widetilde \varphi_{l,r_ln_{k}}}}
{{\widetilde \varphi_{s,r_ln_{k}}}}\big|_{_{E_{k}}}\right\|_\infty
=\lim_{k\rightarrow \infty}\left\|\frac{\varphi_{l,(r_l-r_s)n_{k}}\cdot{\widetilde \varphi_{s,r_sn_{k}}}}
{{\widetilde \varphi_{l,r_sn_{k}}}}\big|_{_{E_{k}}}\right\|_\infty=0.$$
\end{enumerate}
\end{theorem}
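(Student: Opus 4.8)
The plan is to reduce everything to the two explicit formulas
\[
T_l^{n}f(x)=\widetilde\varphi_{l,n}(x)^{-1}\,f(xa^{-n}),\qquad S_l^{n}h(x)=\varphi_{l,n}(x)^{-1}\,h(xa^{n}),
\]
obtained by iterating $T_lf(x)=w_l(x)f(xa^{-1})$ and its right inverse $S_l$, together with the norm invariance of Lemma \ref{invariant}. The decisive computation is that of the off-diagonal multiplier: for $h$ supported in a compact set, a change of variables $y=xa^{(r_l-r_m)n}$ and Lemma \ref{invariant} give $\|T_m^{r_mn}S_l^{r_ln}h\|=\|C_{m,l,n}\,h\|$, where $C_{m,l,n}$ is, after cancelling the telescoping products, \emph{exactly} the first mixed quotient of (ii) with $s=m$ when $m<l$, and \emph{exactly} the second mixed quotient with $s=l$ when $m>l$. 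Thus the two families of mixed conditions say precisely that the cross terms $T_m^{r_mn_k}S_l^{r_ln_k}$ restricted to $E_k$ have norm tending to $0$, while the diagonal $\varphi$- and $\widetilde\varphi$-conditions govern $T_l^{r_ln_k}(u|_{E_k})$ and $S_l^{r_ln_k}(v_l|_{E_k})$. I would record all of these identifications first, as they make both implications mechanical.

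\emph{Sufficiency.} For (ii)$\Rightarrow$(i), by density of $C_c(G)$ it suffices to separate neighborhoods of functions $u,v_1,\dots,v_L\in C_c(G)$; enlarging, assume all are supported in one compact $K$ with $\lambda(K)>0$, and apply (ii) to this $K$ to get $E_k$ and $n_k$. Since $N_\Phi(\chi_B)=1/\Phi^{-1}(1/\lambda(B))$ and $\lambda(K\setminus E_k)\to0$, the restrictions $u|_{E_k},v_l|_{E_k}$ converge in $L^\Phi(G)$ to $u,v_l$. Put
\[
f_k=u|_{E_k}+\sum_{l=1}^{L}S_l^{r_ln_k}(v_l|_{E_k}).
\]
The basic $\widetilde\varphi$-condition with Lemma \ref{invariant} gives $\|S_l^{r_ln_k}(v_l|_{E_k})\|\le\|\widetilde\varphi_{l,r_ln_k}|_{E_k}\|_\infty\|v_l\|\to0$, so $f_k\to u$. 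Applying $T_m^{r_mn_k}$, the diagonal term is $T_m^{r_mn_k}S_m^{r_mn_k}(v_m|_{E_k})=v_m|_{E_k}\to v_m$ since $T_mS_m=I$; the term $T_m^{r_mn_k}(u|_{E_k})$ has norm $\le\|\varphi_{m,r_mn_k}|_{E_k}\|_\infty\|u\|\to0$; and each off-diagonal term $T_m^{r_mn_k}S_l^{r_ln_k}(v_l|_{E_k})$, $l\ne m$, is controlled by the appropriate mixed quotient and tends to $0$. Hence $T_m^{r_mn_k}f_k\to v_m$ for every $m$, so for large $k$ the single vector $f_k$ witnesses d-transitivity.

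\emph{Necessity.} For (i)$\Rightarrow$(ii), fix compact $K$ with $\lambda(K)>0$ and apply d-transitivity with $U=V_1=\dots=V_L=B(\chi_K,1/k)$ to obtain $n_k$ and $f_k$ with $\|f_k-\chi_K\|<1/k$ and $g_{l,k}:=T_l^{r_ln_k}f_k$ satisfying $\|g_{l,k}-\chi_K\|<1/k$. One first checks $n_k\to\infty$: a bounded subsequence would force, by continuity, $T_l^{r_ln_0}\chi_K=\chi_K$, hence $Ka^{r_ln_0}=K$ modulo null sets, which the aperiodicity criterion of \cite{cc11} excludes, since a nonzero power of an aperiodic element is again aperiodic. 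Rearranging $g_{l,k}=T_l^{r_ln_k}f_k$ gives the pointwise identities $\widetilde\varphi_{l,r_ln_k}(x)=f_k(xa^{-r_ln_k})/g_{l,k}(x)$ and $\varphi_{l,r_ln_k}(x)=g_{l,k}(xa^{r_ln_k})/f_k(x)$, and the same manipulations yield, for $x\in K$,
\[
\frac{\widetilde\varphi_{s,(r_l-r_s)n_k}\,\widetilde\varphi_{l,r_ln_k}}{\widetilde\varphi_{s,r_ln_k}}(x)=\frac{g_{s,k}(xa^{-(r_l-r_s)n_k})}{g_{l,k}(x)},\qquad
\frac{\varphi_{l,(r_l-r_s)n_k}\,\widetilde\varphi_{s,r_sn_k}}{\widetilde\varphi_{l,r_sn_k}}(x)=\frac{g_{l,k}(xa^{(r_l-r_s)n_k})}{g_{s,k}(x)}.
\]
Because $a$ is aperiodic, for large $n_k$ every translate $Ka^{\pm r_ln_k}$ and $Ka^{\pm(r_l-r_s)n_k}$ is disjoint from $K$; there $\chi_K=0$, so the numerators (values of $f_k$ or $g_{\cdot,k}$ on disjoint translates) are small in $L^\Phi$-norm, while the denominators ($g_{\cdot,k}$ or $f_k$ on $K$) are close to $1$. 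Using $N_\Phi(\chi_B)=1/\Phi^{-1}(1/\lambda(B))$ to pass from norm smallness to measure smallness, I would define $E_k\subset K$ as the set where all finitely many numerators are $<\eta_k$ and all denominators exceed $1/2$; then $\lambda(K\setminus E_k)\to0$ and every quantity in (ii) is $\le2\eta_k\to0$ uniformly on $E_k$.

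The main obstacle is the bookkeeping in the necessity direction: extracting all $L$ basic and $L(L-1)$ mixed estimates from the \emph{single} transitive vector $f_k$ and its images $g_{l,k}$, on one common set $E_k$ with $\lambda(E_k)\to\lambda(K)$, and converting the $L^\Phi$-norm control furnished by d-transitivity into the pointwise supremum bounds over $E_k$ demanded by (ii). The algebraic heart---verifying that the telescoping products collapse the cross-term multipliers $C_{m,l,n}$ into exactly the two mixed quotients, matching the cases $m\lessgtr l$ with the two displayed families---is routine but must be done carefully; everything else then follows from Lemma \ref{invariant}, aperiodicity, and the characteristic-function norm formula.
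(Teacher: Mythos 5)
Your proposal is correct and follows essentially the same route as the paper's proof: the same test vector $f_k = u|_{E_k} + \sum_l S_l^{r_l n_k}(v_l|_{E_k})$ in the sufficiency direction, and in the necessity direction the same mechanism of approximating $\chi_K$, converting $N_\Phi$-smallness to measure-smallness via $N_\Phi(\chi_B)=1/\Phi^{-1}(1/\lambda(B))$, and using aperiodicity to place the translated arguments outside $K$ so that the weight quotients collapse to ratios $g_{s,k}(xa^{-(r_l-r_s)n_k})/g_{l,k}(x)$ and $g_{l,k}(xa^{(r_l-r_s)n_k})/g_{s,k}(x)$, exactly as in the paper's estimates with the sets $A, B_{l,m}, C_{l,m}, D_{l,m}, F_{s,m}$. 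Your explicit verification that the $n_k$ may be taken with $n_k\to\infty$ (via the contradiction $T_l^{r_ln_0}\chi_K=\chi_K$) is a small refinement of a point the paper passes over silently, but it does not change the structure of the argument.
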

\begin{proof}
(i) $\Rightarrow$ (ii). Let $T_1^{r_1}, T_2^{r_2},\cdot\cdot\cdot, T_N^{r_N}$ be disjoint transitive. Let
$K\subset G$ be a compact set with $\lambda (K)>0$. By aperiodicity of $a$, there is some $M$ such that $K\cap Ka^{\pm n}=\emptyset$
for all $n>M$.

Let $\varepsilon\in(0,1)$, and let $\chi_{K}\in L^\Phi(G)$ be the characteristic function of $K$.
By the assumption of disjoint transitivity, there exist a vector $f \in L^{\Phi}(G)$ and some $m>M$ such that
$$N_{\Phi}(f-\chi_K)< \varepsilon^2 \quad \mbox{and} \qquad N_{\Phi}(T_l^{r_lm}f-\chi_{K})< \varepsilon^2$$
for $l=1,2,\ldots,L$. Let
$$A=\{x\in K:|f(x)-1|\geq\varepsilon\}\qquad \mbox{and}\qquad B_{l,m}=\{x\in K:|T_l^{r_lm}f(x)-1|\geq\varepsilon\}.$$
Then
$$|f(x)|>1-\varepsilon \qquad ( x\in K\setminus A)\qquad \mbox{and}\qquad |T_l^{r_lm}f(x)|>1-\varepsilon \qquad ( x\in K\setminus B_{l,m}).$$
In both cases, $\lambda(A)<\frac{1}{\Phi(\frac{1}{\varepsilon})}$ and $\lambda(B_{l,m})<\frac{1}{\Phi(\frac{1}{\varepsilon})}$.
Indeed,
\begin{eqnarray*}
\varepsilon^2 &>& N_\Phi(f-\chi_K)\\
&\geq& N_\Phi\left(\chi_K(f-1)\right)\\
&\geq& N_\Phi\left(\chi_{A}(f-1)\right)\\
&\geq& N_\Phi(\chi_{A}\varepsilon)\\
&=& \frac{\varepsilon}{\Phi^{-1}(\frac{1}{\lambda(A)})}
\end{eqnarray*}
and
\begin{eqnarray*}
\varepsilon^2 &>& N_\Phi(T_l^{r_lm}f-\chi_K)\\
&\geq& N_\Phi\left(\chi_K(T_l^{r_lm}f-1)\right)\\
&\geq& N_\Phi\left(\chi_{B_{l,m}}(T_l^{r_lm}f-1)\right)\\
&\geq& N_\Phi(\chi_{B_{l,m}}\varepsilon)\\
&=& \frac{\varepsilon}{\Phi^{-1}(\frac{1}{\lambda(B_{l,m})})}.
\end{eqnarray*}
Similarly, put
$$C_{l,m}=\{x\in K\setminus A:\varphi_{l,r_lm}(x)\geq \varepsilon\}\qquad \mbox{and}\qquad
D_{l,m}=\{x\in K\setminus B_{l,m}:\widetilde{\varphi}_{l,r_lm}(x) \geq\varepsilon\}.$$
Then
$$\varphi_{l,r_lm}(x) < \varepsilon\ \ (x\in K\setminus (A\cup C_{l,m}))\qquad \mbox{and}\qquad
\widetilde{\varphi}_{l,r_lm}(x) < \varepsilon\ \ (x\in K\setminus (B_{l,m}\cup D_{l,m})).$$
Moreover, by Lemma \ref{invariant}, $K\cap Ka^{\pm m}=\emptyset$ and the right invariance of the Haar measure $\lambda$,
we arrive at
\begin{eqnarray*}
\varepsilon^2 &>& N_\Phi(T_l^{r_lm}f-\chi_K)\\
&\geq& N_\Phi\left(\chi_{C_{l,m}a^{r_lm}}(T_l^{r_lm}f-0)\right)\\
&=& N_\Phi\left(\chi_{C_{l,m}a^{r_lm}}(\prod_{j=0}^{r_lm-1}w_l*\delta_{a}^j)(f*\delta_{a^{r_lm}})\right)\\
&=& N_\Phi\left(\chi_{C_{l,m}}(\prod_{j=1}^{r_lm}w_l*\delta_{a^{-1}}^j)f\right)\\
&=& N_\Phi(\chi_{C_{l,m}}\varphi_{l,r_lm}f)\\
&>& \frac{\varepsilon(1-\varepsilon)}{\Phi^{-1}(\frac{1}{\lambda(C_{l,m})})}
\end{eqnarray*}
and
\begin{eqnarray*}
\varepsilon^2 &>& N_\Phi(f-\chi_K)\\
&\geq& N_\Phi\left(\chi_{D_{l,m}a^{-r_lm}}(S_l^{r_lm}T_l^{r_lm}f-0)\right)\\
&=& N_\Phi\left(\chi_{D_{l,m}a^{-r_lm}}(\prod_{j=1}^{r_lm}w_l*\delta_{a^{-1}}^j)^{-1}((T_l^{r_lm}f)*\delta_{a^{-r_lm}})\right)\\
&=& N_\Phi\left(\chi_{D_{l,m}}(\prod_{j=0}^{r_lm-1}w_l*\delta_{a}^j)^{-1}(T_l^{r_lm}f)\right)\\
&=& N_\Phi\left(\chi_{D_{l,m}}\widetilde{\varphi}_{l,r_lm}(T_l^{r_lm}f)\right)\\
&>& \frac{\varepsilon(1-\varepsilon)}{\Phi^{-1}(\frac{1}{\lambda(D_{l,m})})},
\end{eqnarray*}
which implies
$\lambda(C_{l,m})<\frac{1}{\Phi(\frac{1-\varepsilon}{\varepsilon})}$
and
$\lambda(D_{l,m})<\frac{1}{\Phi(\frac{1-\varepsilon}{\varepsilon})}.$
Hence we have the preliminary estimates for the two sequences $\varphi_{l,r_lm}, \widetilde{\varphi}_{l,r_lm}$,
and the measures of the four sets $A, B_{l,m}, C_{l,m}, D_{l,m}$.

Next, we will consider the other two weight conditions for $1\leq s< l\leq L$.
Since
$$B_{l,m}=\{x\in K:|w_l(x)w_l(xa^{-1})\cdot\cdot\cdot w_l(xa^{-(r_lm-1)})f(xa^{-r_lm})-1|\geq\varepsilon\},$$
we have
$$|w_l(x)w_l(xa^{-1})\cdot\cdot\cdot w_l(xa^{-(r_lm-1)})f(xa^{-r_lm})|>1-\varepsilon \qquad ( x\in K\setminus B_{l,m}).$$
For $1\leq s< l\leq L$, let
$$F_{s,m}=\{x\in G\setminus K: |w_s(x)w_s(xa^{-1})\cdot\cdot\cdot w_s(xa^{-(r_sm-1)})f(xa^{-r_sm})|\geq\varepsilon\}.$$
Then
$$|w_s(x)w_s(xa^{-1})\cdot\cdot\cdot w_s(xa^{-(r_sm-1)})f(xa^{-r_sm})|<\varepsilon\ \ \mbox{on}\ \ Ka^{-(r_l-r_s)m}\setminus F_{s,m}\subset G\setminus K$$
which says
$$|w_s(xa^{-(r_l-r_s)m})\cdot\cdot\cdot w_s(xa^{-(r_lm-1)})f(xa^{-r_lm})|<\varepsilon\ \ \mbox{on}\ \ K\setminus F_{s,m}a^{(r_l-r_s)m}.$$
Moreover, $\lambda(F_{s,m})<\frac{1}{\Phi(\frac{1}{\varepsilon})}$ by
\begin{eqnarray*}
\varepsilon^{2}&>& N_\Phi(T_{s}^{r_sm}f-\chi_{K})\\
&\geq& \Phi(\chi_{F_{s,m}}(T_{s}^{r_sm}f-0))\\
&=& \Phi(\chi_{F_{s,m}}w_s(w_s*\delta_a)\cdot\cdot\cdot(w_s*\delta_a^{r_sm-1})(f*a^{r_sm}))\\
&\geq& \Phi(\chi_{F_{s,m}}\varepsilon)\\
&=& \frac{\varepsilon}{\Phi^{-1}(\frac{1}{\lambda(F_{s,m})})}.
\end{eqnarray*}
Therefore, for $1\leq s< l\leq L$ and $x\in K\setminus (F_{s,m}a^{(r_l-r_s)m}\cup B_{l,m})$,
\begin{eqnarray*}
&&\frac{ w_s(xa^{-(r_l-r_s)m})\cdot\cdot\cdot w_s(xa^{-(r_lm-1)})}{w_l(x)w_l(xa^{-1})\cdot\cdot\cdot w_l(xa^{-(r_lm-1)})}\\
&=&\frac{ w_s(xa^{-(r_l-r_s)m})\cdot\cdot\cdot w_s(xa^{-(r_lm-1)})|f(xa^{-r_lm})|}{w_l(x)w_l(xa^{-1})\cdot\cdot\cdot w_l(xa^{-(r_lm-1)})|f(xa^{-r_lm})|}<\frac{\varepsilon}{1-\varepsilon}.
\end{eqnarray*}
That is,
$$\frac{{\widetilde \varphi_{s,(r_l-r_s)m}}(x)\cdot{\widetilde \varphi_{l,r_lm}}(x)}
{{\widetilde \varphi_{s,r_lm}}(x)}<\frac{\varepsilon}{1-\varepsilon} \quad \mbox{on}\quad K\setminus (F_{s,m}a^{(r_l-r_s)m}\cup B_{l,m}).$$
Similarly, by
\begin{equation*}
|w_l(x)w_l(xa^{-1})\cdot\cdot\cdot w_l(xa^{-(r_lm-1)})f(xa^{-r_lm})|<\varepsilon \quad \mbox{on} \quad Ka^{-(r_s-r_l)m}\setminus F_{l,m},
\end{equation*}
one has
$$|w_l(xa^{-(r_s-r_l)m})\cdot\cdot\cdot w_l(xa^{-(r_sm-1)})f(xa^{-r_sm})|<\delta \quad \mbox{on} \quad K\setminus F_{l,m}a^{(r_s-r_l)m}.$$
Therefore, for $1\leq s< l\leq L$ and $x\in K\setminus (F_{l,m}a^{(r_s-r_l)m}\cup B_{s,m})$,
\begin{eqnarray*}
&&\frac{ w_l(xa^{-(r_s-r_l)m})\cdot\cdot\cdot w_l(xa^{-(r_sm-1)})}{w_s(x)w_s(xa^{-1})\cdot\cdot\cdot
w_s(xa^{-(r_sm-1)})}\\
&=&\frac{ w_l(xa^{-(r_s-r_l)m})\cdot\cdot\cdot
w_l(xa^{-(r_sm-1)})|f(xa^{-r_sm})|}{w_s(x)w_s(xa^{-1})\cdot\cdot\cdot
w_s(xa^{-(r_sm-1)})|f(xa^{-r_sm})|}<\frac{\varepsilon}{1-\varepsilon}.
\end{eqnarray*}
Hence
$$\frac{\varphi_{l,(r_l-r_s)m}(x)\cdot{\widetilde \varphi_{s,r_sm}}(x)}
{{\widetilde \varphi_{l,r_sm}}(x)}<\varepsilon\quad\mbox{on}\quad K\setminus (F_{l,m}a^{(r_s-r_l)m}\cup B_{s,m}).$$
Finally, put
$$E_{m}=(K\setminus A) \setminus \displaystyle\bigcup_{1\leq l\leq L} (B_{l,m}\cup C_{l,m}\cup D_{l,m})
\setminus \displaystyle\bigcup_{1\leq s<l\leq L}(F_{s,m}a^{(r_l-r_s)m}\cup F_{l,m}a^{(r_s-r_l)m}).$$
Then
$$\lambda(K\setminus E_{m})<\frac{1+L^2}{\Phi(\frac{1}{\varepsilon})}+\frac{2L}{\Phi(\frac{1-\varepsilon}{\varepsilon})},\qquad
\|\varphi_{l,r_lm}|_{_{E_{m}}}\|_\infty<\varepsilon,\qquad \|{\widetilde \varphi_{l,r_lm}}|_{_{E_{m}}}\|_\infty<\varepsilon,$$
and
$$\left\|\frac{{\widetilde \varphi_{s,(r_l-r_s)m}}\cdot{\widetilde \varphi_{l,r_lm}}}
{{\widetilde \varphi_{s,r_lm}}}\big|_{_{E_{m}}}\right\|_\infty<\frac{\varepsilon}{1-\varepsilon},\qquad
\left\|\frac{\varphi_{l,(r_l-r_s)m}\cdot{\widetilde \varphi_{s,r_sm}}}
{{\widetilde \varphi_{l,r_sm}}}\big|_{_{E_{m}}}\right\|_\infty<\frac{\varepsilon}{1-\varepsilon},$$
proving the condition (ii).

(ii) $\Rightarrow$ (i). We show that $T_1^{r_1}, T_2^{r_2},\cdot\cdot\cdot, T_L^{r_L}$ are
disjoint topologically transitive.
For $1\leq l\leq L$, let $U$ and $V_l$ be non-empty open subsets of $L^\Phi(G)$.
Since the space $C_c(G)$ of continuous functions on $G$ with
compact support is dense in $L^\Phi(G)$, we can pick $f,g_l\in C_c(G)$
with $f\in U$ and $g_l\in V_l$ for $l=1,2,\cdot\cdot\cdot,L$. Let $K$ be the union of the compact
supports of $f$ and all $g_l$.

Let $E_k\subset K$ and the sequences $(\varphi_{l,n}), ({\widetilde \varphi_{l,n}})$
satisfy condition (ii). Then we may assume $\varphi_{l,r_ln_k}<\frac{\frac{1}{2^k}}{\|f\|_\infty}$
on $E_k$ for $l=1,2,\cdot\cdot\cdot,L$.
Hence, by Lemma \ref{invariant}, we first observe for $1\leq l\leq L$,
\begin{eqnarray*}
&&N_{\Phi}\left(T_l^{r_ln_k}(f\chi_{E_k})\right)\\
&=&N_{\Phi}\left((\prod_{j=0}^{r_ln_k-1}w_l*\delta_{a}^j)(f*\delta_{a^{r_ln_k}})(\chi_{E_k}*\delta_{a^{r_ln_k}})\right)\\
&=&N_{\Phi}\left((\prod_{j=1}^{r_ln_k}w_l*\delta_{a^{-1}}^j)f\chi_{E_k}\right)\\
&=&N_{\Phi}\left(\varphi_{l,r_ln_k}f\chi_{E_k}\right)\\
&<&\frac{\|f\|_{\infty}\frac{\frac{1}{2^k}}{\|f\|_{\infty}}}{\Phi^{-1}(\frac{1}{\lambda(E_k)})}\rightarrow 0
\end{eqnarray*}
as $k\rightarrow \infty$. Likewise, by the sequence $(\widetilde \varphi_{l,r_ln_k})$,
\begin{eqnarray*}
&&\lim_{k\rightarrow \infty}N_{\Phi}\left(S_l^{r_ln_k}(f\chi_{E_k})\right)\\
&=&\lim_{k\rightarrow \infty}N_{\Phi}\left((\prod_{j=1}^{r_ln_k}w_l*\delta_{a^{-1}}^j)^{-1}(f*\delta_{a^{-ln_k}})(\chi_{E_k}*\delta_{a^{-ln_k}})\right)\\
&=&\lim_{k\rightarrow \infty}N_{\Phi}\left((\prod_{j=0}^{r_ln_k-1}w_l*\delta_{a}^j)^{-1}f\chi_{E_k}\right)\\
&=&\lim_{k\rightarrow \infty}N_{\Phi}\left(\widetilde{\varphi}_{l,r_ln_k}f\chi_{E_k}\right)=0
\end{eqnarray*}
for $l=1,2,...,L$. Similarly, for $1\leq s<l\leq L$,
\begin{eqnarray*}
&& N_\Phi\left(T_l^{r_ln_k}(S_s^{r_sn_k}(g_s\chi_{E_k}))\right)\\
&=& N_\Phi\left( w_l(w_l*\delta_a^1)\cdot\cdot\cdot (w_l*\delta_a^{r_ln_k-1})((S_s^{r_sn_k}(g_s\chi_{E_k}))*\delta_a^{r_ln_{k}})\right)\\
&=& N_\Phi\left( \frac{w_l(w_l*\delta_a^1)\cdot\cdot\cdot (w_l*\delta_a^{r_ln_k-1})}
{(w_s*\delta_a^{r_ln_k-1})(w_s*\delta_a^{r_ln_k-2})\cdot\cdot\cdot (w_s\delta_a^{r_ln_k-r_sn_k})}
((g_s\chi_{E_k})*\delta_a^{r_ln_k}*\delta_a^{-r_sn_k})\right)\\
&=& N_\Phi\left( \frac{(w_l*\delta_a^{(r_s-r_l)n_k})(w_l*\delta_a^{(r_s-r_l)n_k+1})\cdot\cdot\cdot (w_l*\delta_a^{r_sn_k-1})}
{(w_s*\delta_a^{r_sn_k-1})(w_s*\delta_a^{r_sn_k-2})\cdot\cdot\cdot (w_s)}
(g_s\chi_{E_k})\right)\\
&=& N_\Phi\left(\frac{\varphi_{l,(r_l-r_s)n_k}\cdot\widetilde{\varphi}_{s,r_sn_k}}{\widetilde{\varphi}_{l,r_sn_k}}(g_s\chi_{E_k})\right)\rightarrow 0
\end{eqnarray*}
as $k\rightarrow \infty$, and
\begin{eqnarray*}
&& N_\Phi\left(T_s^{r_sn_k}(S_l^{r_ln_k}(g_l\chi_{E_k}))\right)\\
&=& N_\Phi\left( w_s(w_s*\delta_a^1)\cdot\cdot\cdot (w_s*\delta_a^{r_sn_k-1})((S_l^{r_ln_k}(g_l\chi_{E_k}))*\delta_a^{r_sn_{k}})\right)\\
&=& N_\Phi\left( \frac{w_s(w_s*\delta_a^1)\cdot\cdot\cdot (w_s*\delta_a^{r_sn_k-1})}
{(w_l*\delta_a^{r_sn_k-1})(w_l*\delta_a^{r_sn_k-2})\cdot\cdot\cdot (w_l\delta_a^{r_sn_k-r_ln_k})}
((g_l\chi_{E_k})*\delta_a^{r_sn_k}*\delta_a^{-r_ln_k})\right)\\
&=& N_\Phi\left( \frac{(w_s*\delta_a^{(r_l-r_s)n_k})(w_s*\delta_a^{(r_l-r_s)n_k+1})\cdot\cdot\cdot (w_s*\delta_a^{r_ln_k-1})}
{(w_l*\delta_a^{r_ln_k-1})(w_l*\delta_a^{r_ln_k-2})\cdot\cdot\cdot (w_l)}
(g_l\chi_{E_k})\right)\\
&=& N_\Phi\left(\frac{\varphi_{s,(r_s-r_l)n_k}\cdot\widetilde{\varphi}_{l,r_ln_k}}{\widetilde{\varphi}_{s,r_ln_k}}(g_l\chi_{E_k})\right)\rightarrow 0
\end{eqnarray*}
as $k\rightarrow \infty$ for $1\leq s<l\leq L$.
Now for each $k\in \Bbb{N}$, put
$$v_k=f\chi_{E_k}+S^{r_1n_k}_{1}(g_1\chi_{E_k})+S^{r_2n_k}_{2}(g_2\chi_{E_k})+\cdot\cdot\cdot+S^{r_Ln_k}_{L}(g_L\chi_{E_k}).$$
Then
$$ N_\Phi\left( v_k-f \right) \leq N_\Phi\left( f\chi_{K\setminus E_k} \right)+\sum_{l=1}^{L} N_\Phi\left( S^{r_ln_k}_{l}(g_l\chi_{E_k})\right)$$
and
\begin{eqnarray*}
&& N_\Phi\left(T^{r_ln_k}_{l}v_k-g_l \right)\\
&\leq&  N_\Phi\left( T^{r_ln_k}_{l}(f\chi_{E_k})  \right) + N_\Phi\left( T^{r_ln_k}_{l}S_1^{r_1n_k}(g_1\chi_{E_k})  \right)
+\cdot\cdot\cdot+ N_\Phi\left(  T^{r_ln_k}_{l}S_{l-1}^{r_{l-1}n_k}(g_{l-1}\chi_{E_k}) \right)   \\
&+& N_\Phi\left( g_l\chi_{K\setminus E_k} \right) +  N_\Phi\left( T^{r_ln_k}_{l}S_{l+1}^{r_{l+1}n_k}(g_{l+1}\chi_{E_k})  \right)
+\cdot\cdot\cdot+ N_\Phi\left( T^{r_ln_k}_{l}S_{L}^{r_{L}n_k}(g_{L}\chi_{E_k})  \right).
\end{eqnarray*}
Hence
$\lim_{k\rightarrow \infty}v_{k}=f$ and $\lim_{k\rightarrow \infty}T_l^{r_ln_k}v_k=g_l$ for $l=1,2,\cdot\cdot\cdot,L$.
Therefore
$$\emptyset\neq U\cap T_1^{-{r_1n_k}}(V_1)\cap T_2^{-{r_2n_k}}(V_2)\cap\cdot\cdot\cdot\cap T_L^{-{r_Ln_k}}(V_L).$$
\end{proof}

As in \cite[Example 2.3, 2.4, 2.5]{chen171}, it is not difficult to find the weight satisfying the above weight condition
on various locally compact groups (for instance, $G=\Bbb{Z}$ or $G=\Bbb{R}$). For completeness, we include one example of Heisenberg groups only.

\begin{example}\label{example}
Let $$G=\Bbb{H}:=\left\{\left(\begin{matrix}
1 & x & z\\
0 & 1 & y\\
0 & 0 & 1
\end{matrix}\right):x,y,z\in \Bbb{R}\right\}$$
be the Heisenberg group which is neither abelian nor compact. For
convenience, an element in $G$ is written as $(x,y,z)$. Let $(x,y,z),(x',y',z')\in \Bbb{H}$.
Then the multiplication is given by $$(x,y,z)\cdot(x',y',z')=(x+x',y+y',z+z'+xy')$$ and
$$(x,y,z)^{-1}=(-x,-y,xy-z).$$

Let $a=(1,0,2)\in \Bbb{H}$ which is aperiodic. Given $L\geq2$, let $w_l$ be a weight on $\Bbb{H}$ for $l=1,2,\cdot\cdot\cdot,L$.
Then $a^{-1}=(-1,0,-2)$ and the weighted translation $T_{(1,0,2),w_l}$
on $L^\Phi(\Bbb{H})$ is defined by
$$T_{(1,0,2),w_l}f(x,y,z)=w_l(x,y,z)f(x-1,y,z-2)\qquad(f\in L^\Phi(\Bbb{H})).$$
By Theorem \ref{disjointtransitive}, for $1\leq r_1< r_2< \cdot\cdot\cdot < r_L$,
the operators $T_{(1,0,2),w_1}^{r_1}, T_{(1,0,2),w_2}^{r_2},\cdot\cdot\cdot, T_{(1,0,2),w_L}^{r_L}$
are disjoint topologically transitive on $L^\Phi(\Bbb{H})$ if
given $\varepsilon>0$ and a compact subset $K$ of $\Bbb{H}$,
there exists a positive integer $n$ such that for $(x,y,z)\in K$, we have,
for $1\leq l\leq L$,
$$\varphi_{l,r_ln}(x,y,z)=\prod_{j=1}^{r_ln}w_l\ast\delta^j_{(1,0,2)^{-1}}(x,y,z)=\prod_{j=1}^{r_ln}w_l(x+j,y,z+2j)<\varepsilon$$
and
$$\widetilde{\varphi}^{^{-1}}_{l,r_ln}(x,y,z)=\prod_{j=0}^{r_ln-1}w_l\ast\delta^j_{(1,0,2)}(x)=\prod_{j=0}^{r_ln-1}w_l(x-j,y,z-2j)>\frac{1}{\varepsilon},$$
and for $1\leq s< l\leq L$,
\begin{eqnarray*}
&&\frac{{\widetilde \varphi_{s,(r_l-r_s)n}}(x,y,z)\cdot{\widetilde \varphi_{l,r_ln}}(x,y,z)}{{\widetilde \varphi_{s,r_ln}}(x,y,z)}\\
&=&\frac{\prod_{j=0}^{r_ln-1}w_s(x-j,y,z-2j)}{\prod_{j=0}^{(r_l-r_s)n-1}w_s(x-j,y,z-2j)\cdot\prod_{j=0}^{r_ln-1}w_l(x-j,y,z-2j)}\\
&=&\frac{\prod_{j=(r_l-r_s)n}^{r_ln-1}w_s(x-j,y,z-2j)}{\prod_{j=0}^{r_ln-1}w_l(x-j,y,z-2j)}<\varepsilon
\end{eqnarray*}
and
\begin{eqnarray*}
&&\frac{\varphi_{l,(r_l-r_s)n}(x,y,z)\cdot{\widetilde \varphi_{s,r_sn}}(x,y,z)}{{\widetilde \varphi_{l,r_sn}}(x,y,z)}\\
&=&\frac{\prod_{j=1}^{(r_l-r_s)n}w_l(x+j,y,z+2j)\cdot\prod_{j=0}^{r_sn-1}w_l(x-j,y,z-2j)}{\prod_{j=0}^{r_sn-1}w_s(x-j,y,z-2j)}\\
&=&\frac{\prod_{j=-(r_l-r_s)n}^{r_sn-1}w_l(x-j,y,z-2j)}{\prod_{j=0}^{r_sn-1}w_s(x-j,y,z-2j)}<\varepsilon.
\end{eqnarray*}
One can obtain the required weight conditions by defining $w:\Bbb{H}\rightarrow (0,\infty)$ as follows:
$$w_l(x,y,z)=\left\{
\begin{array}{ll}
\frac{1}{2}& \mbox{ if } z\geq 1\\\\
\frac{1}{2^z}& \mbox{ if } -1<z< 1\\\\
2 & \mbox{ if } z\leq-1
\end{array}
\right.$$
for $1\leq l\leq L$.
\end{example}

If the weighted translation $T_{a,w_l}$ is generated by the same weight $w_l:=w$ for $l=1,2,\cdot\cdot\cdot,L$ in
Theorem \ref{disjointtransitive}, then we have a simpler characterization of disjoint transitivity.

\begin{corollary}\label{thesmaeweight}
Let $G$ be a locally compact group, and let $a\in G$ be an aperiodic element. Let
$\Phi$ be a Young function. Given some $L\geq2$, let $T=T_{a,w}$ be a
weighted translation on $L^{\Phi}(G)$, generated by $a$ and
a weight $w$. For $1\leq r_1< r_2< \cdot\cdot\cdot < r_L$,
the following conditions are equivalent.
\begin{enumerate}
\item[{\rm(i)}] $T^{r_1}, T^{r_2},\cdot\cdot\cdot, T^{r_L}$ are disjoint topologically transitive on $L^{\Phi}(G)$.
\item[{\rm(ii)}] For each compact subset $K \subset G$ with $\lambda(K) >0$, there is a
sequence of Borel sets $(E_{k})$ in $K$ such that
$\displaystyle\lambda(K) = \lim_{k \rightarrow \infty}
\lambda(E_{k})$ and both sequences
$$\varphi_{n}:=\prod_{j=1}^{n}w\ast\delta_{a^{-1}}^{j}\ \ \ \  and \ \ \
\ \ {\widetilde \varphi_{n}}:=\left(\prod_{j=0}^{n-1}w\ast\delta_{a}^{j}\right)^{-1}$$
admit respectively subsequences $(\varphi_{r_ln_{k}})$ and $(\widetilde{\varphi}_{r_ln_k})$ satisfying
$($ for $1\leq l \leq L$ $)$
$$\lim_{k\rightarrow \infty}\|\varphi_{r_ln_{k}}|_{_{E_{k}}}\|_\infty=
\lim_{k\rightarrow \infty}\|{\widetilde \varphi_{r_ln_{k}}}|_{_{E_{k}}}\|_\infty=0$$
and $($ for $1\leq s < l \leq L$ $)$
$$\lim_{k\rightarrow \infty}\|{\widetilde \varphi_{(r_l-r_s)n_{k}}}|_{_{E_{k}}}\|_\infty
=\lim_{k\rightarrow \infty}\|\varphi_{(r_l-r_s)n_{k}}|_{_{E_{k}}}\|_\infty=0.$$
\end{enumerate}
\end{corollary}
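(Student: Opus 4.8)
The plan is to obtain this corollary as a direct specialization of Theorem \ref{disjointtransitive} to the case $w_1 = w_2 = \cdots = w_L = w$. Under this hypothesis the sequences attached to the individual operators collapse: every $\varphi_{l,n}$ reduces to the single sequence $\varphi_n = \prod_{j=1}^{n} w \ast \delta_{a^{-1}}^{j}$ and every $\widetilde\varphi_{l,n}$ reduces to $\widetilde\varphi_n = \bigl(\prod_{j=0}^{n-1} w \ast \delta_a^{j}\bigr)^{-1}$, both independent of the index $l$. Consequently the whole of condition (ii) of the theorem can be rewritten using only the two sequences $(\varphi_n)$ and $(\widetilde\varphi_n)$, and the task is simply to check that this rewriting produces exactly the two displayed conditions of the corollary.

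First I would dispose of the diagonal conditions, those indexed by $1 \le l \le L$. Since $\varphi_{l,r_l n_k} = \varphi_{r_l n_k}$ and $\widetilde\varphi_{l,r_l n_k} = \widetilde\varphi_{r_l n_k}$ after the substitution, the requirement $\lim_{k\to\infty}\|\varphi_{r_l n_k}|_{E_k}\|_\infty = \lim_{k\to\infty}\|\widetilde\varphi_{r_l n_k}|_{E_k}\|_\infty = 0$ is verbatim the first displayed condition of the corollary. The substantive point is the collapse of the two cross conditions indexed by $1 \le s < l \le L$. In the first cross ratio, equal weights give $\widetilde\varphi_{s,(r_l-r_s)n_k} = \widetilde\varphi_{(r_l-r_s)n_k}$ and $\widetilde\varphi_{l,r_l n_k} = \widetilde\varphi_{s,r_l n_k} = \widetilde\varphi_{r_l n_k}$, so the common factor $\widetilde\varphi_{r_l n_k}$ cancels between numerator and denominator and the ratio becomes simply $\widetilde\varphi_{(r_l-r_s)n_k}$; concretely,
$$\frac{\widetilde\varphi_{s,(r_l-r_s)n_k}\cdot\widetilde\varphi_{l,r_l n_k}}{\widetilde\varphi_{s,r_l n_k}} = \frac{\prod_{j=(r_l-r_s)n_k}^{r_l n_k-1} w \ast \delta_{a^{-1}}^{j}}{\prod_{j=0}^{r_l n_k-1} w \ast \delta_{a^{-1}}^{j}} = \widetilde\varphi_{(r_l-r_s)n_k}.$$
In the same way, equal weights make $\widetilde\varphi_{s,r_s n_k} = \widetilde\varphi_{l,r_s n_k}$, so in the second cross ratio the common factor $\widetilde\varphi_{r_s n_k}$ cancels and the expression reduces to $\varphi_{l,(r_l-r_s)n_k} = \varphi_{(r_l-r_s)n_k}$. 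These two reductions are exactly the second displayed condition of the corollary.

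Putting the pieces together, condition (ii) of the corollary is literally condition (ii) of Theorem \ref{disjointtransitive} rewritten under $w_1 = \cdots = w_L = w$, so the equivalence (i) $\Leftrightarrow$ (ii) is immediate from the theorem. I do not anticipate any genuine obstacle in this argument: the whole content is the two telescoping cancellations above, and the only point that warrants a line of care is confirming that, because a single weight $w$ forces the factor-families $\widetilde\varphi_{s,\cdot}$ and $\widetilde\varphi_{l,\cdot}$ to coincide, the cross ratios really do share a common factor that cancels cleanly, leaving the single-index quantities $\widetilde\varphi_{(r_l-r_s)n_k}$ and $\varphi_{(r_l-r_s)n_k}$.
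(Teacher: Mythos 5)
Your proposal is correct and is essentially the paper's own route: the paper states this corollary without a separate proof, precisely because it is the specialization of Theorem \ref{disjointtransitive} to $w_1=\cdots=w_L=w$, and your cancellation of the common factors $\widetilde\varphi_{r_ln_k}$ (in the first cross ratio) and $\widetilde\varphi_{r_sn_k}$ (in the second) is exactly the reduction needed to turn condition (ii) of the theorem into condition (ii) of the corollary. One cosmetic slip only: in your intermediate display the factors making up $\widetilde\varphi$ should be $w\ast\delta_a^j$ rather than $w\ast\delta_{a^{-1}}^j$, but since your verbal argument rests on the (correct) identification $\widetilde\varphi_{s,m}=\widetilde\varphi_{l,m}=\widetilde\varphi_m$ and the ensuing cancellation, the conclusion is unaffected.
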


\begin{remark}
Let $r_1=1, r_2=2,\cdot\cdot\cdot,r_L=L$ in Corollary \ref{thesmaeweight}. Then
$T^1, T^2,\cdot\cdot\cdot, T^L$ are disjoint transitive if, and only if,
$$\lim_{k\rightarrow \infty}\|\varphi_{ln_{k}}|_{_{E_{k}}}\|_\infty=
\lim_{k\rightarrow \infty}\|{\widetilde \varphi_{ln_{k}}}|_{_{E_{k}}}\|_\infty=0.$$
\end{remark}

By strengthening the weight condition in Theorem \ref{disjointtransitive}, we can characterize disjoint mixing weighted translations
on the Orlicz space $L^\Phi(G)$.

\begin{corollary}
Let $G$ be a locally compact group, and let $a\in G$ be an aperiodic element. Let
$\Phi$ be a Young function. Given some $L\geq2$, let $T_l=T_{a,w_l}$ be a
weighted translation on $L^{\Phi}(G)$, generated by $a$ and
a weight $w_l$ for $1\leq l\leq L$. For $1\leq r_1< r_2< \cdot\cdot\cdot < r_L$,
the following conditions are equivalent.
\begin{enumerate}
\item[{\rm(i)}] $T_1^{r_1}, T_2^{r_2},\cdot\cdot\cdot, T_L^{r_L}$ are disjoint topologically mixing on $L^{\Phi}(G)$.
\item[{\rm(ii)}] For each compact subset $K \subset G$ with $\lambda(K) >0$, there is a
sequence of Borel sets $(E_{n})$ in $K$ such that
$\displaystyle\lambda(K) = \lim_{n \rightarrow \infty}
\lambda(E_{n})$ and both sequences
$$\varphi_{l,n}:=\prod_{j=1}^{n}w_l\ast\delta_{a^{-1}}^{j}\ \ \ \  and \ \ \
\ \ {\widetilde \varphi_{l,n}}:=\left(\prod_{j=0}^{n-1}w_l\ast\delta_{a}^{j}\right)^{-1}$$
satisfy $($ for $1\leq l \leq L$ $)$
$$\lim_{n\rightarrow \infty}\|\varphi_{l,r_ln}|_{_{E_{n}}}\|_\infty=
\lim_{n\rightarrow \infty}\|{\widetilde \varphi_{l,r_ln}}|_{_{E_{n}}}\|_\infty=0$$
and $($ for $1\leq s < l \leq L$ $)$
$$\lim_{n\rightarrow \infty}\left\|\frac{{\widetilde \varphi_{s,(r_l-r_s)n}}\cdot{\widetilde \varphi_{l,r_ln}}}
{{\widetilde \varphi_{s,r_ln}}}\big|_{_{E_{n}}}\right\|_\infty
=\lim_{n\rightarrow \infty}\left\|\frac{\varphi_{l,(r_l-r_s)n}\cdot{\widetilde \varphi_{s,r_sn}}}
{{\widetilde \varphi_{l,r_sn}}}\big|_{_{E_{n}}}\right\|_\infty=0.$$
\end{enumerate}
\end{corollary}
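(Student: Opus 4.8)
The plan is to mirror the proof of Theorem \ref{disjointtransitive} line for line, upgrading every ``there exists some $m$'' to ``for all sufficiently large $m$'' and correspondingly replacing the subsequence limits by full-sequence limits. For the direction (i) $\Rightarrow$ (ii), I would fix a compact $K$ with $\lambda(K)>0$ and, by aperiodicity of $a$, choose $M$ with $K\cap Ka^{\pm n}=\emptyset$ for $n>M$. For each $\varepsilon\in(0,1)$, apply disjoint topological mixing of $T_1^{r_1},\dots,T_L^{r_L}$ to the open balls of radius $\varepsilon^2$ about $\chi_K$ (one in the $U$-slot and one in each $V_l$-slot): this furnishes a threshold $N(\varepsilon)>M$ such that for every $m\geq N(\varepsilon)$ there is $f\in L^\Phi(G)$ with $N_\Phi(f-\chi_K)<\varepsilon^2$ and $N_\Phi(T_l^{r_lm}f-\chi_K)<\varepsilon^2$ for all $l$. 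For each such $m$, the measure and sup-norm estimates of Theorem \ref{disjointtransitive} — built from the sets $A,B_{l,m},C_{l,m},D_{l,m}$ and $F_{s,m}$ — go through verbatim and produce a Borel set $E_m\subset K$ with
$$\lambda(K\setminus E_m)<\frac{1+L^2}{\Phi(\frac{1}{\varepsilon})}+\frac{2L}{\Phi(\frac{1-\varepsilon}{\varepsilon})}$$
together with all the sup-norm bounds ($<\varepsilon$, respectively $<\frac{\varepsilon}{1-\varepsilon}$) on $E_m$.

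To assemble a single sequence $(E_n)$ indexed by all $n$, I would pick $\varepsilon_k\downarrow0$, set $N_k:=N(\varepsilon_k)$, and (discarding finitely many terms) arrange $N_1<N_2<\cdots$. For each $n\geq N_1$ let $k(n)$ be the largest index with $N_{k(n)}\leq n$; since $n\geq N_{k(n)}$, the construction above applies with tolerance $\varepsilon_{k(n)}$ and yields $E_n$, while for $n<N_1$ I set $E_n=\emptyset$. As $n\to\infty$ we have $k(n)\to\infty$, hence $\varepsilon_{k(n)}\to0$, so $\Phi(\frac{1}{\varepsilon_{k(n)}})\to\infty$ and $\Phi(\frac{1-\varepsilon_{k(n)}}{\varepsilon_{k(n)}})\to\infty$, forcing $\lambda(E_n)\to\lambda(K)$ and each displayed sup-norm quantity to tend to $0$. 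This is exactly condition (ii), now stated for the full sequences $(\varphi_{l,r_ln})$ and $(\widetilde{\varphi}_{l,r_ln})$ rather than for subsequences.

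For (ii) $\Rightarrow$ (i), given nonempty open sets $U,V_1,\dots,V_L$ I would use density of $C_c(G)$ to choose $f\in U$ and $g_l\in V_l$ in $C_c(G)$, and let $K$ be the union of their supports. Feeding this $K$ into (ii) produces the full sequence $(E_n)$, and I set
$$v_n=f\chi_{E_n}+\sum_{l=1}^{L}S_l^{r_ln}(g_l\chi_{E_n}).$$
The norm computations of Theorem \ref{disjointtransitive}, carried out with $n$ in place of the subsequence index $n_k$ and using Lemma \ref{invariant}, then show $N_\Phi(v_n-f)\to0$ and $N_\Phi(T_l^{r_ln}v_n-g_l)\to0$ for every $l$ as $n\to\infty$. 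Hence $v_n\in U$ and $T_l^{r_ln}v_n\in V_l$ for all sufficiently large $n$, so that
$$\emptyset\neq U\cap T_1^{-r_1n}(V_1)\cap\cdot\cdot\cdot\cap T_L^{-r_Ln}(V_L)$$
holds from some $n$ onwards, which is disjoint topological mixing.

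The only genuinely new ingredient beyond Theorem \ref{disjointtransitive} is the threshold bookkeeping in the second paragraph: mixing promotes the single index $m$ produced by transitivity to an entire tail $\{m:m\geq N(\varepsilon)\}$, and the step requiring care is to interleave the thresholds $N(\varepsilon_k)$ so that a valid choice of $E_n$ is available for \emph{every} $n$ with a tolerance $\varepsilon_{k(n)}\to0$. All of the analytic estimates — the measures of $A,B_{l,m},C_{l,m},D_{l,m},F_{s,m}$ and the bounds on the cross terms for $1\leq s<l\leq L$ — are identical to those already established in Theorem \ref{disjointtransitive}, so I expect no additional analytic obstacle; the content of the corollary lies precisely in recognizing that mixing converts the subsequence statement into the full-sequence one.
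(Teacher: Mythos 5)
Your proposal is correct and follows exactly the route the paper intends: its entire proof of this corollary is the remark that one repeats the argument of Theorem \ref{disjointtransitive} with the full sequence $(n)$ in place of the subsequence $(n_k)$, which is precisely what you do. Your threshold bookkeeping with $\varepsilon_k\downarrow 0$ and $N(\varepsilon_k)$ in the (i) $\Rightarrow$ (ii) direction is a correct (and more explicit) filling-in of the detail the paper leaves implicit, namely how mixing upgrades the single index $m$ to a set $E_n$ for every sufficiently large $n$ with tolerances tending to $0$.
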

\begin{proof}
The proof is similar to that of Theorem \ref{disjointtransitive} by using the full sequence $(n)$ instead of subsequence $(n_k)$.
\end{proof}

We end this paper by giving a sufficient and necessary condition for weighted translation on the Orlicz space $L^{\Phi}(G)$ to be disjoint chaotic.
From the observation in Remark \ref{remark}, the characterization for single operator to be chaotic together with Theorem \ref{disjointtransitive}
will describe disjoint chaos. Hence, we recall a result in \cite{chen-orlicz} below.

\begin{theorem}$($\cite[Theorem 2]{chen-orlicz}\label{chaos}$)$
Let $G$ be a locally compact group, and let $a\in G$ be an aperiodic element. Let $w$ be a weight on $G$, and let
$\Phi$ be a Young function. Let $T_{a,w}$ be a weighted translation on $L^\Phi(G)$, and let ${\cal P}(T_{a,w})$ be the set of periodic elements of $T_{a,w}$. Then the following conditions are equivalent.
\begin{enumerate}
\item[{\rm(i)}] $T_{a,w}$ is chaotic on $L^\Phi(G)$.
\item[{\rm(ii)}] ${\cal P}(T_{a,w})$ is dense in $L^\Phi(G)$.
\item[{\rm(iii)}]For each compact subset $K \subseteq G$ with $\lambda(K) >0$, there is a
sequence of Borel sets $(E_{k})$ in $K$ such that
$\displaystyle\lambda(K) = \lim_{k \rightarrow \infty}\lambda(E_k)$, and both sequences
$$\varphi_{n}:=\prod_{j=1}^{n}w\ast\delta_{a^{-1}}^{j}\ \ \ \  and \ \ \ \ \
{\widetilde \varphi_{n}}:=\left(\prod_{j=0}^{n-1}w\ast\delta_{a}^{j}\right)^{-1}$$
admit respectively subsequences $(\varphi_{n_k})$ and $(\widetilde{\varphi}_{n_k})$ satisfying
$$\lim_{k\rightarrow\infty}\left\|\sum_{l=1}^{\infty}\varphi_{ln_k}+\sum_{l=1}^{\infty}\widetilde{\varphi}_{ln_k}\Big|_{E_k}\right\|_{\infty}=0.$$
\end{enumerate}
\end{theorem}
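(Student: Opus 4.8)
The plan is to prove the cycle $(i)\Rightarrow(ii)\Rightarrow(iii)\Rightarrow(i)$. The implication $(i)\Rightarrow(ii)$ is immediate: by definition (cf. Remark \ref{remark}) a chaotic operator has a dense set of periodic points. The substance of the theorem therefore lies in the two remaining implications, and the genuinely delicate one is $(ii)\Rightarrow(iii)$.

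For $(ii)\Rightarrow(iii)$, fix a compact $K$ with $\lambda(K)>0$ and, by aperiodicity of $a$, choose $M$ with $K\cap Ka^{\pm n}=\emptyset$ for all $n>M$. Given $\varepsilon\in(0,1)$, use density of $\mathcal{P}(T_{a,w})$ to pick a periodic point $h$ with $N_\Phi(h-\chi_K)<\varepsilon^2$ and $T_{a,w}^{N}h=h$; since a point of period $N$ is also of period $jN$, we may assume $N>M$, so that the translates $\{Ka^{lN}:l\in\Bbb{Z}\}$ are pairwise disjoint. Writing $T_{a,w}^{n}f(x)=f(xa^{-n})/\widetilde{\varphi}_{n}(x)$ and $S_{a,w}^{n}f(x)=f(xa^{n})/\varphi_{n}(x)$, the relations $T_{a,w}^{N}h=h=S_{a,w}^{N}h$ combined with the cocycle identities $\varphi_{(l+1)N}(x)=\varphi_{lN}(x)\,\varphi_{N}(xa^{lN})$ and its analogue for $\widetilde{\varphi}$ yield, for every $l\ge0$, $h(xa^{lN})=\varphi_{lN}(x)h(x)$ and $h(xa^{-lN})=\widetilde{\varphi}_{lN}(x)h(x)$. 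Setting $A=\{x\in K:|h(x)-1|\ge\varepsilon\}$ gives $|h|>1-\varepsilon$ off $A$, and the identity $N_\Phi(\chi_B)=1/\Phi^{-1}(1/\lambda(B))$ bounds $\lambda(A)$ exactly as in the proof of Theorem \ref{disjointtransitive}.

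The crux is to control, \emph{uniformly}, the whole series on a large subset of $K$. The key lever is that $N_\Phi(h\chi_{G\setminus K})=N_\Phi\big((h-\chi_K)\chi_{G\setminus K}\big)\le N_\Phi(h-\chi_K)<\varepsilon^2$, so that restricting the modular to the disjoint translates $Ka^{\pm lN}$ (all lying off $K$) and using the cocycle identities gives, for $P=\{x\in K\setminus A:\sum_{l\ge1}(\varphi_{lN}(x)+\widetilde{\varphi}_{lN}(x))\ge\varepsilon\}$,
\[
1\ge\int_{G\setminus K}\Phi\!\left(\frac{|h|}{\varepsilon^{2}}\right)d\lambda\ge\sum_{l\ge1}\int_{P}\Phi\!\left(\frac{\varphi_{lN}(x)\,|h(x)|}{\varepsilon^{2}}\right)d\lambda(x),
\]
together with the analogous estimate for $\widetilde{\varphi}$. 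Removing $A$ and $P$ from $K$ is intended to produce a Borel set $E$ with $\lambda(K\setminus E)$ small and $\big\|\sum_{l\ge1}(\varphi_{lN}+\widetilde{\varphi}_{lN})\big|_{E}\big\|_\infty<\varepsilon$; letting $\varepsilon=\varepsilon_k\downarrow0$ and recording the periods $n_k=N_k$ then delivers the sets $E_k$ and the subsequence in (iii). I expect this step to be the main obstacle: passing from modular smallness of a function \emph{spread} over infinitely many disjoint translates to \emph{sup-norm} smallness of the series $\sum_l\varphi_{lN}$ on a large set is not automatic, since a thinly spread tail can have small modular while $\sum_l\varphi_{lN}$ stays bounded below. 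Resolving it requires exploiting the multiplicative cocycle structure (geometric decay of the tails once $\varphi_N,\widetilde{\varphi}_N$ are small, so that the series is comparable to its first term), and this is where the real work lies.

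For $(iii)\Rightarrow(i)$, fix $g\in C_c(G)$ supported in a compact $K$ of positive measure and take $E_k,n_k$ from (iii). Define $h_k=\sum_{l\in\Bbb{Z}}T_{a,w}^{ln_k}(g\chi_{E_k})$, which is periodic of period $n_k$ by shift-invariance of the sum. The term $l=0$ is $g\chi_{E_k}\to g$ since $\lambda(E_k)\to\lambda(K)$; and, using convexity of $\Phi$ (so $\Phi(ts)\le t\,\Phi(s)$ for $t\in[0,1]$) together with the disjointness of the supports $Ka^{lN}$, the positive and negative tails have Luxemburg norm controlled by $\big\|\sum_{l\ge1}\varphi_{ln_k}|_{E_k}\big\|_\infty$ and $\big\|\sum_{l\ge1}\widetilde{\varphi}_{ln_k}|_{E_k}\big\|_\infty$, which tend to $0$ by (iii). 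Hence $h_k\to g$, so $\mathcal{P}(T_{a,w})$ is dense, giving (ii). Finally, keeping only the index $l=1$ in (iii) yields $\lim_k\|\varphi_{n_k}|_{E_k}\|_\infty=\lim_k\|\widetilde{\varphi}_{n_k}|_{E_k}\|_\infty=0$, which is precisely the transitivity criterion for the single weighted translation $T_{a,w}$ (the specialization of the argument of Theorem \ref{disjointtransitive} to one operator, equivalently the characterization in \cite{chen-orlicz}). Being topologically transitive with a dense set of periodic points, $T_{a,w}$ is chaotic, establishing (i).
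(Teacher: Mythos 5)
First, a point of comparison you could not have known: the paper contains \emph{no proof} of Theorem \ref{chaos}. It is recalled verbatim from the submitted paper \cite{chen-orlicz} (``Hence, we recall a result in \cite{chen-orlicz} below''), so there is no internal argument to measure you against; what follows assesses your proposal on its own terms and against the standard arguments in this line of work (\cite{cc11}, the $L^p$ chaos characterization, and the pattern of Theorem \ref{disjointtransitive}). Your architecture is the expected one: (i)$\Rightarrow$(ii) is definitional, and your (iii)$\Rightarrow$(i) — building periodic points $h_k=\sum_{l\in\Bbb{Z}}T_{a,w}^{ln_k}(g\chi_{E_k})$, using pairwise disjointness of the translates $E_ka^{ln_k}$, convexity of $\Phi$ with $\Phi(0)=0$, and the single-operator transitivity criterion — is essentially sound. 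The problem is (ii)$\Rightarrow$(iii), exactly at the step you flag. From $N_\Phi(h-\chi_K)<\varepsilon^2$ you get the modular bound $\int_{K\setminus A}\sum_{l\ge1}\Phi\bigl(c\,\varphi_{lN}\bigr)\,d\lambda\le1$ with $c=(1-\varepsilon)/\varepsilon^2$, and you must convert this into sup-norm smallness of the \emph{bare} series $\sum_{l\ge1}\varphi_{lN}$ on a large set. Your proposed mechanism (geometric decay via the cocycle) fails: $\varphi_{(l+1)N}(x)=\varphi_{lN}(x)\,\varphi_N(xa^{lN})$, and the increments $\varphi_N(xa^{lN})$ are evaluated at points $xa^{lN}\notin K$, where nothing controls them; they can tend to $1$, so the series need not be comparable to its first term.

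In fact this gap cannot be filled, because the implication (ii)$\Rightarrow$(iii) is false for the statement as printed. Take $G=\Bbb{Z}$, $a=1$, $\Phi(t)=t^2/2$ (so $L^\Phi(G)=\ell^2(\Bbb{Z})$ and $\Phi$ is $\Delta_2$-regular), and a weight with $\prod_{j=1}^{n}w(j)=(n+1)^{-3/5}$ for $n\ge1$ and $w(j)=2$ for $j\le0$; then $w,w^{-1}\in L^\infty(G)$. Here $\varphi_n(x)\to0$ and $\widetilde{\varphi}_n(x)\to0$ for every $x$, so $T_{a,w}$ is transitive, and for every finitely supported $g$ the periodic points $\sum_{l\in\Bbb{Z}}T_{a,w}^{lN}g$ converge to $g$ as $N\to\infty$, since $\sum_{l\ge1}\varphi_{lN}(x)^2\le C_x\,N^{-6/5}\sum_{l\ge1}l^{-6/5}\to0$ and the $\widetilde{\varphi}$-tail is geometric; hence (i) and (ii) hold. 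Yet for $K=\{0\}$ one has $\sum_{l\ge1}\varphi_{ln}(0)=\sum_{l\ge1}(ln+1)^{-3/5}=\infty$ for \emph{every} $n$, so (iii) fails. This matches the known $L^p$ theorem, in which the series carries $p$-th powers, $\sum_{l}(\varphi_{ln_k}^{\,p}+\widetilde{\varphi}_{ln_k}^{\,p})$; the correct Orlicz analogue must apply $\Phi$ (equivalently, an Orlicz sequence norm) to the weight products rather than sum them bare. With the condition restated that way, your Chebyshev-type argument for (ii)$\Rightarrow$(iii) goes through (the set where $\sum_l\Phi(c\varphi_{lN})\ge\eta$ has measure at most $1/\eta$), and your (iii)$\Rightarrow$(i) construction still works, using $\Delta_2$ to absorb the rescaling by $\|g\|_\infty/k$ inside $\Phi$. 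So your instinct about where the real work lies was correct; the resolution is not a finer estimate but a repair of the statement itself.
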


Applying Theorem \ref{disjointtransitive} and Theorem \ref{chaos}, one can obtain the following result directly, and therefore
we omit its proof here.

\begin{theorem}\label{disjointchaotic}
Let $G$ be a locally compact group, and let $a\in G$ be an aperiodic element. Let
$\Phi$ be a Young function. Given some $L\geq2$, let $T_l=T_{a,w_l}$ be a
weighted translation on $L^{\Phi}(G)$, generated by $a$ and
a weight $w_l$ for $1\leq l\leq L$. For $1\leq r_1< r_2< \cdot\cdot\cdot < r_L$,
the following conditions are equivalent.
\begin{enumerate}
\item[{\rm(i)}] $T_1^{r_1}, T_2^{r_2},\cdot\cdot\cdot, T_L^{r_L}$ are disjoint chaotic on $L^{\Phi}(G)$.
\item[{\rm(ii)}] For each compact subset $K \subset G$ with $\lambda(K) >0$, there is a
sequence of Borel sets $(E_{k})$ in $K$ such that $\displaystyle\lambda(K) = \lim_{k \rightarrow \infty}\lambda(E_{k})$
and both sequences
$$\varphi_{l,r_ln}:=\prod_{j=1}^{r_ln}w_l\ast\delta_{a^{-1}}^{j}\ \ \ \  and
\ \ \ \ \ {\widetilde \varphi_{l,r_ln}}:=\left(\prod_{j=0}^{r_ln-1}w_l\ast\delta_{a}^{j}\right)^{-1}$$
admit respectively subsequences $(\varphi_{l,tr_ln_{k}})$ and $(\widetilde{\varphi}_{l,tr_ln_k})$ satisfying
$($ for $1\leq l \leq L$ $)$
$$\lim_{k\rightarrow\infty}\left\|\sum_{t=1}^{\infty}\varphi_{l,tr_ln_k}+\sum_{t=1}^{\infty}\widetilde{\varphi}_{l,tr_ln_k}\Big|_{E_k}\right\|_{\infty}=0$$
and
$($ for $1\leq s < l \leq L$ $)$
$$\lim_{k\rightarrow \infty}\left\|\frac{{\widetilde \varphi_{s,(r_l-r_s)n_{k}}}\cdot{\widetilde \varphi_{l,r_ln_{k}}}}
{{\widetilde \varphi_{s,r_ln_{k}}}}\big|_{_{E_{k}}}\right\|_\infty
=\lim_{k\rightarrow \infty}\left\|\frac{\varphi_{l,(r_l-r_s)n_{k}}\cdot{\widetilde \varphi_{s,r_sn_{k}}}}
{{\widetilde \varphi_{l,r_sn_{k}}}}\big|_{_{E_{k}}}\right\|_\infty=0.$$
\end{enumerate}
\end{theorem}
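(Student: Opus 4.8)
The plan is to derive the equivalence from Remark \ref{remark}, Theorem \ref{disjointtransitive} and Theorem \ref{chaos}, once the powers $T_l^{r_l}$ are put into the form to which Theorem \ref{chaos} applies. Expanding the iterate gives
\[
T_l^{n}f=\Big(\prod_{j=0}^{n-1}w_l\ast\delta_a^{\,j}\Big)\,(f\ast\delta_{a^{n}}),
\]
so $T_l^{r_l}=T_{a^{r_l},W_l}$ is itself a weighted translation with weight $W_l=\prod_{j=0}^{r_l-1}w_l\ast\delta_a^{\,j}$ and generator $a^{r_l}$, which is again aperiodic since $\overline{\langle a\rangle}$ is a finite union of cosets of $\overline{\langle a^{r_l}\rangle}$ and hence compact precisely when the latter is. A short index count then identifies the sequences attached to $T_l^{r_l}$ in Theorem \ref{chaos} with subsequences of those in the statement: as $t$ runs over $1,\dots,m$ and $j$ over $0,\dots,r_l-1$, the exponent $tr_l-j$ runs bijectively through $1,\dots,mr_l$, so the $\varphi$-sequence of $T_l^{r_l}$ at stage $m$ equals $\varphi_{l,r_lm}$ and its $\widetilde\varphi$-sequence equals $\widetilde\varphi_{l,r_lm}$. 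Hence condition (iii) of Theorem \ref{chaos} for $T_l^{r_l}$ is exactly the summed condition on $\varphi_{l,tr_ln_k}$ and $\widetilde\varphi_{l,tr_ln_k}$ appearing in (ii).

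For the implication (ii) $\Rightarrow$ (i) I would argue as follows. Because every weight is strictly positive, each summand of the two series is nonnegative, so the $t=1$ terms $\varphi_{l,r_ln_k}$ and $\widetilde\varphi_{l,r_ln_k}$ are dominated on $E_k$ by the full sums; thus the summed condition forces $\lim_k\|\varphi_{l,r_ln_k}|_{E_k}\|_\infty=\lim_k\|\widetilde\varphi_{l,r_ln_k}|_{E_k}\|_\infty=0$ for every $l$. Together with the two cross conditions for $s<l$, which are copied verbatim from Theorem \ref{disjointtransitive}(ii), condition (ii) therefore implies condition (ii) of Theorem \ref{disjointtransitive} for the same $(E_k)$ and $(n_k)$, so $T_1^{r_1},\dots,T_L^{r_L}$ are disjoint topologically transitive. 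At the same time, for each fixed $l$ the summed condition is condition (iii) of Theorem \ref{chaos} for $T_l^{r_l}$, so each $T_l^{r_l}$ is chaotic. By Remark \ref{remark} these two facts yield disjoint chaos.

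The converse (i) $\Rightarrow$ (ii) is where the real work lies. Disjoint chaoticity means, by definition, that $T_1^{r_1},\dots,T_L^{r_L}$ are disjoint transitive and that the set $\mathcal{P}(T_1^{r_1},\dots,T_L^{r_L})$ of periodic tuples, whose members satisfy $T_l^{r_ln}x_l=x_l$ for a common $n$, is dense in $L^\Phi(G)^L$. The obstacle is that the forward halves of Theorem \ref{disjointtransitive} and of Theorem \ref{chaos} each guarantee their estimates only along a sequence $(E_k)$ and a subsequence $(n_k)$ that may depend on the operator and on the condition being checked, whereas (ii) requires one family $(E_k)\subset K$ with $\lambda(E_k)\to\lambda(K)$ and one subsequence $(n_k)$ carrying all the estimates at once. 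I would resolve this not by citing the two theorems as black boxes but by re-running the construction behind Theorem \ref{disjointtransitive}: for fixed compact $K$ and $\varepsilon\in(0,1)$, aperiodicity supplies $M$ with $K\cap Ka^{\pm n}=\emptyset$ for $n>M$, disjoint transitivity supplies a single $f$ and one index $m>M$ with $N_\Phi(f-\chi_K)<\varepsilon^2$ and $N_\Phi(T_l^{r_lm}f-\chi_K)<\varepsilon^2$ for all $l$, and the bad sets $A,B_{l,m},C_{l,m},D_{l,m},F_{s,m}$ are formed exactly as there to produce the single and cross estimates on a good set. To upgrade the single estimates to the summed ones on the same good set I would invoke the periodic-element mechanism underlying Theorem \ref{chaos}: density of the periodic tuples, taken with a common period, lets me choose approximants whose convergent defining series are exactly $\sum_t\varphi_{l,tr_lm}$ and $\sum_t\widetilde\varphi_{l,tr_lm}$, forcing these series to be small on the corresponding good sets. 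Since there are only finitely many conditions, one summed condition per $l$ and two cross conditions per pair $s<l$, I would intersect the finitely many good sets, whose measures all approach $\lambda(K)$, and then set $\varepsilon=1/k$ and extract diagonally to obtain the single pair $(E_k)$, $(n_k)$. The step I expect to demand the most care is precisely this simultaneous realization: aligning the transitivity stage $m$ with a common period for all the operators, so that one configuration delivers the summed estimates for every $l$ and the cross estimates for every pair on the very same sets $E_k$ and along the very same indices $n_k$.
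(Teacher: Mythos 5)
Your direction (ii) $\Rightarrow$ (i) is correct and complete: the identification $T_l^{r_l}=T_{a^{r_l},W_l}$ with $W_l=\prod_{j=0}^{r_l-1}w_l\ast\delta_a^{j}$, the aperiodicity of $a^{r_l}$, the index count showing that the sequences attached to $T_l^{r_l}$ in Theorem \ref{chaos} are exactly $\varphi_{l,r_ln}$ and $\widetilde\varphi_{l,r_ln}$, and the positivity argument that the $t=1$ terms are dominated by the full sums (hence condition (ii) of Theorem \ref{disjointtransitive} holds) are all sound, and Remark \ref{remark} then yields d-chaos. This is exactly the route the paper intends; note the paper itself gives no proof at all, merely asserting the theorem follows ``directly'' from Theorems \ref{disjointtransitive} and \ref{chaos}, so here you have actually supplied what the paper omits.

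The genuine gap is in (i) $\Rightarrow$ (ii), and it is precisely the synchronization step you flag at the end but never close. Your plan produces the cross estimates from d-transitivity at some time $m$ over which you have no control, and the summed estimates from a periodic tuple $(g_1,\dots,g_L)$ with common period $p$, which (by positivity, taking subsums) are valid exactly at the indices in $p\Bbb{N}$; nothing in your sketch forces $m\in p\Bbb{N}$. The two mechanisms cannot be merged naively: the cross conditions require a \emph{single} function $f$ with $T_l^{r_lm}f\approx\chi_K$ for all $l$, whereas a periodic tuple has distinct coordinates, and if one tries to extract the cross quotient from the tuple it reduces on $K$ to an expression containing the ratio $g_l(xa^{-r_lm})/g_s(xa^{-r_lm})$ of two small, completely uncontrolled quantities; conversely, the transitive $f$ carries no information about the infinite tails $\sum_{t\geq 2}\varphi_{l,tr_lm}$. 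The missing ingredient is that d-transitivity passes to synchronized powers: a vector $x$ is d-hypercyclic for $(T_1^{r_1},\dots,T_L^{r_L})$ iff $(x,\dots,x)$ is hypercyclic for the direct sum $T_1^{r_1}\oplus\cdots\oplus T_L^{r_L}$ on $(L^\Phi(G))^L$, so Ansari's theorem on powers of hypercyclic operators ($HC(S)=HC(S^{p})$ for every $p$) shows every such $x$ is also d-hypercyclic for $(T_1^{r_1p},\dots,T_L^{r_Lp})$; combined with \cite[Proposition 2.3]{bp07}, the tuple $(T_1^{r_1p},\dots,T_L^{r_Lp})$ is again d-topologically transitive. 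With this lemma your construction works if run in the right order: fix $K$ and $\varepsilon$, first choose the periodic tuple with common period $p$ (taken larger than the aperiodicity constant $M$), then apply d-transitivity of the $p$-th power tuple to the balls of radius $\varepsilon^{2}$ about $\chi_K$ to obtain $f$ and a time $m'\in p\Bbb{N}$, run the proof of Theorem \ref{disjointtransitive} at $m'$ for the single and cross estimates, use $p\mid m'$ for the summed estimates, and finally intersect the finitely many good sets and diagonalize over $\varepsilon=1/k$ as you propose. Without some such power-invariance statement, the existence of one common pair $(E_k)$, $(n_k)$ carrying all the estimates is unjustified.
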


\begin{remark}
We note that the weight $w$ given in Example \ref{example} also satisfies the condition (ii) in Theorem \ref{disjointchaotic}.
\end{remark}

\vspace{.1in}
\end{document}